\newtheorem{thm}{Theorem}[section]
\newtheorem{lem}{Lemma}[section]
\newtheorem{prop}[thm]{Proposition}
\theoremstyle{definition}
\theoremstyle{remark}
\newtheorem{rem}{Remark}[section]
\begin{document}
	
	\title{{More on Equivalent Formulation of  Implicit Complementarity  Problem}}
	
	\author{Bharat kumar$^{a,1}$, Deepmala$^{a,2}$ and A.K. Das $^{b,3}$\\
		\emph{\small PDPM-Indian Institute of Information Technology, Design and Manufacturing,}\\
		\emph{\small Jabalpur - 482005 (MP), India}\\
		\emph{\small $^{b}$Indian Statistical Institute, 203 B.T. Road, Kolkata - 700108, India }\\
		\emph{\small $^1$Email:bharatnishad.kanpu@gmail.com , $^2$Email: dmrai23@gmail.com}\\		\emph{\small $^3$Email: akdas@isical.ac.in}}
	\date{}
	\maketitle
	
	\abstract{\noindent  This article presents an equivalent formulation  of the implicit complementarity problem. We  demonstrate that solution of the equivalent  formulation is equivalent to the solution of the implicit complementarity problem. Moreover, we provide another equivalent formulation of the implicit complementarity problem using a strictly increasing function.
	 }

	\noindent \textbf{Keywords.} Implicit complementarity problem, equivalent form, strictly increasing function.\\
	
	\noindent \textbf{Mathematics Subject Classification.}  90C33, 90C30.\\
	
	
	\maketitle
	
	\section{Introduction}\label{sec1}
Bensoussan et al. \cite{Ben1975} presented the implicit complementarity problem (ICP).  	 ICPs are a class of mathematical optimization problems that solve a system of nonlinear equations that includes both complementary conditions and equality or inequality constraints. ICPs can be formulated as follows: \\Consider the matrix  $\mathcal{A} \in \mathbb{R}^{n\times n}$  and  the vector  $b \in \mathbb{R}^n$,  the implicit complementarity problem denoted as ICP$( \mathcal{A}, b, f )$ is to find the solution $ r \in \mathbb{R}^n $ to the following system:
\begin{equation}\label{eq0}
	\mathcal{H}(r)=r-f(r)\geq 0,~~~  \mathcal{F}(r)=\mathcal{A}r+b \geq 0,~~~ \mathcal{H}(r)^T\mathcal{F}(r)=0,	
\end{equation}
where $f(r)$ is a mapping from $\mathbb{R}^n$ to $\mathbb{R}^n$.
ICPs arise in many different areas of mathematics and science, including economics, engineering, physics and computer science. For details see,  \cite{Lemke1964},  \cite{Ferris2011} and \cite{Murty1988}.\\

\noindent One of the most popular techniques for developing fast and affordable iterative algorithms is the equivalent formulation of the LCP$(\mathcal{A}, b)$ as an equation that's solution must be the same.  Mangasarian offered an equivalent forms    of  LCP$(\mathcal{A}, b)$  in \cite{Mangasarian1977} and described as 
\begin{equation*}
	r = (r- \omega \Omega(\mathcal{A}r + b))_+,
\end{equation*}
where  $r_+ \in \mathbb{R}^n$, $(r_+)_i = max\{0, r_i\}$ and  $\Omega \in \mathbb{R}^{n\times n}$ is a positive diagonal matrix.   The LCP $(\mathcal {A}, b)$ is described in an analogous form and several iteration techniques are given by Bai in \cite{Bai2010}. For more details on equivalent form of  LCPs and related iteration methods see, \cite{Ahn1981}, \cite{Hong2016}, \cite{Kumaron}, \cite{Kumarmore}, \cite{Noor1988}, \cite{Kumarpro}, \cite{Kumarnote} and  \cite{Cottle1992}. The concept of equivalent formulation has also been used effectively for other complementarity problems, like  implicit complementarity problem \cite{Xie2016} and  \cite{Hong2016} and  horizontal linear complementarity problem \cite{Mezzadri2020}.
Motivated by the works of Mangasarian \cite{Mangasarian1977}, we present an equivalent form of ICP. 
\noindent  The article is presented as follows:   we present an equivalent form of ICP  and provide some  conditions for the solution of ICP in Section 2. The  conclusion is given in Section 3.

	\section{Main results}\label{sec2}
\noindent In the following, we provide an equivalent expression of the implicit complementarity problem.
\begin{thm}\label{thm1}
	Suppose  $\mathcal{A} \in \mathbb{R}^{n \times n}$   and $b \in \mathbb{R}^{n}$. Then $r^* \in \mathbb{R}^{n \times n }$  be the solution of   ICP$( \mathcal{A}, b, f)$ $\iff$ $\mathcal{R}(r^*)=0$, where  $\mathcal{R} : \mathbb{R} \rightarrow \mathbb{R}$ is defined as 
	\begin{equation}\label{E1}
		\mathcal{R}(r)=	\mathcal{H}(r)-(\mathcal{H}(r)-(\mathcal{A}r+b))_{+}.		
	\end{equation}
\end{thm}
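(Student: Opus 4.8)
The plan is to reduce the vector equation $\mathcal{R}(r^*)=0$ to a coordinatewise complementarity condition by exploiting the standard identity that relates the plus operator to the minimum. First I would establish, for arbitrary scalars $u,v\in\mathbb{R}$, the identity
\[
u-(u-v)_{+}=\min\{u,v\},
\]
which follows immediately by distinguishing the two cases $u\le v$ and $u>v$ in the definition $(t)_{+}=\max\{0,t\}$. Since the plus operator, subtraction, and the map $\mathcal{H}$ all act componentwise, applying this identity in coordinate $i$ with $u=\mathcal{H}_i(r)$ and $v=(\mathcal{A}r+b)_i=\mathcal{F}_i(r)$ shows that the $i$-th entry of $\mathcal{R}(r)$ is exactly $\min\{\mathcal{H}_i(r),\mathcal{F}_i(r)\}$. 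Consequently $\mathcal{R}(r^*)=0$ holds if and only if $\min\{\mathcal{H}_i(r^*),\mathcal{F}_i(r^*)\}=0$ for every index $i$.

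Next I would invoke the elementary scalar equivalence
\[
\min\{x,y\}=0\iff x\ge 0,\ y\ge 0,\ xy=0,
\]
whose forward direction uses that a minimum equal to zero forces both arguments nonnegative and at least one to vanish, and whose reverse direction is immediate. Reading this for each $i$ converts the vanishing of every coordinate minimum into the three defining relations of complementarity at that coordinate, namely $\mathcal{H}_i(r^*)\ge 0$, $\mathcal{F}_i(r^*)\ge 0$, and $\mathcal{H}_i(r^*)\mathcal{F}_i(r^*)=0$.

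Finally I would reassemble these coordinatewise statements into the vector form of \eqref{eq0}. The only point needing care is the orthogonality condition: because $\mathcal{H}(r^*)^{T}\mathcal{F}(r^*)=\sum_i \mathcal{H}_i(r^*)\mathcal{F}_i(r^*)$ is a sum of nonnegative terms once both vectors are nonnegative, it vanishes precisely when each product $\mathcal{H}_i(r^*)\mathcal{F}_i(r^*)$ vanishes, so no cross-terms can interfere and the componentwise conditions are genuinely equivalent to the single inner-product equation. Since every equivalence in the chain is reversible, this yields the desired biconditional. I do not expect a substantive obstacle here: the argument is purely algebraic, the mapping $f$ enters only through the abbreviation $\mathcal{H}(r)=r-f(r)$, and no regularity or monotonicity assumption on $f$ is required.
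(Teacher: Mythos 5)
Your proof is correct, and it is organized differently from the paper's. The paper proves the two implications separately, each by a componentwise two-case analysis: for the forward direction it splits on whether $\mathcal{H}_i(r^*)\ge(\mathcal{A}r^*+b)_i$ and reads off $\mathcal{F}_i(r^*)=0$ or $\mathcal{H}_i(r^*)=0$ in the respective cases; for the converse it splits on which of $\mathcal{H}_i(r^*)$, $\mathcal{F}_i(r^*)$ vanishes and verifies $\mathcal{R}_i(r^*)=0$ directly. You instead compress both directions into a single chain of equivalences by first identifying $\mathcal{R}_i(r)=\min\{\mathcal{H}_i(r),\mathcal{F}_i(r)\}$ via the identity $u-(u-v)_{+}=\min\{u,v\}$, and then invoking $\min\{x,y\}=0\iff x\ge 0,\ y\ge 0,\ xy=0$. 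The case split is still present inside the proof of the min identity, so the two arguments are close cousins, but your packaging buys two things. First, it makes the sign conditions $\mathcal{H}(r^*)\ge 0$ and $\mathcal{F}(r^*)\ge 0$ an explicit output of the equivalence, whereas the paper's forward direction records only the complementarity $\mathcal{H}(r^*)^{T}\mathcal{F}(r^*)=0$ and leaves the nonnegativity implicit (it does follow from its two cases, but is never stated). Second, it automatically covers the degenerate coordinate with $\mathcal{H}_i(r^*)=\mathcal{F}_i(r^*)=0$, which the paper's converse omits from its two cases (harmlessly, but still a gap in the write-up). Your closing observation, that the inner product of two nonnegative vectors vanishes iff every componentwise product vanishes, is exactly the right bridge between the coordinatewise statement and the vector equation in the definition of the ICP.
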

\begin{proof} Suppose $\mathcal{R}(r^*)=0$, it follows that $(\mathcal{H}(r^*)-(\mathcal{H}(r^*)-(\mathcal{A}r^*+b))_{+})=0$. This implies that 
	\begin{equation}\label{eq1}
		\mathcal{H}(r^*)=(\mathcal{H}(r^*)-(\mathcal{A}r^*+b))_{+}.
	\end{equation}
	Component-wise, we  consider two cases:\\
	\textbf{Case 1.} when $\mathcal{H}_{i}(r^*) \geq (\mathcal{A}r^*+b)_{i}$, where $\mathcal{H}_{i}(r^*)$ denotes the $i^{th}$ component of the $\mathcal{H}(r^*)$. Then   Eq.  (\ref{eq1}) can be written as 
	\begin{equation*}
		\mathcal{H}_{i}(r^*)=\mathcal{H}_{i}(r^*)-(\mathcal{A}r^*+b)_{i}.
	\end{equation*}
	It follows that, $$\mathcal{F}_{i}(r^*)=(\mathcal{A}r^*+b)_{i}=0.$$\\
	\textbf{Case 2.}  when $\mathcal{H}_{i}(r^*) ~\textless ~(\mathcal{A}r^*+b)_{i}$ $\implies$  $((\mathcal{A}r^*+b)_{i}- \mathcal{H}_{i}(r^*))\ \textless ~0$. Then, we get  
	\begin{equation*}
		\mathcal{H}_{i}(r^*)=0.
	\end{equation*}
	From case (1) and case (2),  $\mathcal{H}_{i}(r^*)\mathcal{F}_{i}(r^*)=0 ~\forall ~ i $  $\implies$ $\mathcal{H}(r^*)^{T}\mathcal{F}(r^*)=0 $\\
	Conversely, let $r^*$ be the solution of  system (\ref{eq0}). By  complementary condition  of ICP,  either $\mathcal{H}_{i}(r^*)=0$ or $\mathcal{F}_{i}(r^*)=0 ~\forall ~i$.\\	
	Component-wise, we  consider two cases:\\
	\textbf{Case 1.} If $\mathcal{H}_{i}(r^*)=0$  and $\mathcal{F}_{i}(r^*) \textgreater 0$,  Eq. (\ref{E1}) becomes 
	$\mathcal{R}_{i}(r^*)=(-(-(\mathcal{A}r^*+b))_{+})$
	$\implies $ $\mathcal{R}_{i}(r^*)=0.$\\
	\textbf{Case 2.} If $\mathcal{H}_{i}(r^*) \textgreater 0$  and $\mathcal{F}_{i}(r^*) = 0$, then 
	$\mathcal{R}_{i}(r^*)=(\mathcal{H}_{i}(r^*)-(\mathcal{H}_{i}(r^*))_{+})$
	$\implies$ $\mathcal{R}_{i}(r^*)=0.$\\
	From case (1) and case (2), we get  $\mathcal{R}_{i}(r^*)=0 ~ \forall ~ i.$ Then  $\mathcal{R}(r^*)=0$
\end{proof}
\begin{rem} Let $\mathcal{S}(r)=\mathcal{H}(r)- \mathcal{R}(r)$, then 
	$r^*$ is the solution of  ICP$( \mathcal{A}, b, f )$ $\iff$  $\mathcal{S}(r^*)= \mathcal{H}(r^*)$.	
\end{rem}
\begin{lem}
	Suppose   $\mathcal{A} \in \mathbb{R}^{n \times n}$  and $b \in \mathbb{R}^{n}$. Let  $\Omega_{1}, \Omega_{2} \in \mathbb{R}^{n \times n}$  be  two positive diagonal matrices and  define the mapping $\bar{\mathcal{R}}(r)= (\Omega_{1}\mathcal{H}(r)- (\Omega_{1}\mathcal{H}(r)-\Omega_{2}(\mathcal{A}r+b))_{+})$. Then  $r^* $ is the solution of ICP$( \mathcal{A}, b, f )$ $\iff$   $\bar{\mathcal{R}}(r^*)=0$ 
\end{lem}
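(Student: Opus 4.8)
The plan is to generalize the proof of Theorem~\ref{thm1} by carrying the diagonal matrices $\Omega_1, \Omega_2$ through the same case analysis. The essential observation is that since $\Omega_1 = \mathrm{diag}(\alpha_1, \dots, \alpha_n)$ and $\Omega_2 = \mathrm{diag}(\beta_1, \dots, \beta_n)$ are \emph{positive} diagonal matrices, each $\alpha_i > 0$ and $\beta_i > 0$, and the map $\bar{\mathcal{R}}$ acts component-wise: $\bar{\mathcal{R}}_i(r) = \alpha_i \mathcal{H}_i(r) - (\alpha_i \mathcal{H}_i(r) - \beta_i \mathcal{F}_i(r))_+$, where I write $\mathcal{F}(r) = \mathcal{A}r + b$. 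Because scaling a real number by a positive constant preserves its sign, the positivity conditions $\mathcal{H}_i(r^*) \geq 0$, $\mathcal{F}_i(r^*) \geq 0$ and the complementarity $\mathcal{H}_i(r^*)\mathcal{F}_i(r^*) = 0$ are each unaffected by multiplication by $\alpha_i$ or $\beta_i$.

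First I would prove the forward direction ($\bar{\mathcal{R}}(r^*) = 0 \implies r^*$ solves the ICP). From $\bar{\mathcal{R}}(r^*) = 0$ I obtain, component-wise, $\alpha_i \mathcal{H}_i(r^*) = (\alpha_i \mathcal{H}_i(r^*) - \beta_i \mathcal{F}_i(r^*))_+$. Since the left side is a $(\cdot)_+$ value it is automatically nonnegative, and as $\alpha_i > 0$ this gives $\mathcal{H}_i(r^*) \geq 0$. I then split into the two cases mirroring Theorem~\ref{thm1}: if $\alpha_i \mathcal{H}_i(r^*) \geq \beta_i \mathcal{F}_i(r^*)$, the equation reduces to $\alpha_i \mathcal{H}_i(r^*) = \alpha_i \mathcal{H}_i(r^*) - \beta_i \mathcal{F}_i(r^*)$, whence $\beta_i \mathcal{F}_i(r^*) = 0$ and thus $\mathcal{F}_i(r^*) = 0$; if instead $\alpha_i \mathcal{H}_i(r^*) < \beta_i \mathcal{F}_i(r^*)$, the positive part is zero, giving $\alpha_i \mathcal{H}_i(r^*) = 0$ hence $\mathcal{H}_i(r^*) = 0$, and then $\beta_i \mathcal{F}_i(r^*) > 0$ forces $\mathcal{F}_i(r^*) > 0 \geq 0$. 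Either way $\mathcal{H}_i(r^*) \geq 0$, $\mathcal{F}_i(r^*) \geq 0$, and $\mathcal{H}_i(r^*)\mathcal{F}_i(r^*) = 0$, so $r^*$ solves the ICP.

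Next I would prove the converse. Assuming $r^*$ solves the ICP, the complementarity condition gives, for each $i$, either $\mathcal{H}_i(r^*) = 0$ or $\mathcal{F}_i(r^*) = 0$. In the first case, $\bar{\mathcal{R}}_i(r^*) = 0 - (-\beta_i \mathcal{F}_i(r^*))_+ = -(-\beta_i \mathcal{F}_i(r^*))_+$, and since $\mathcal{F}_i(r^*) \geq 0$ and $\beta_i > 0$ the argument $-\beta_i \mathcal{F}_i(r^*) \leq 0$, so its positive part vanishes and $\bar{\mathcal{R}}_i(r^*) = 0$. In the second case, $\bar{\mathcal{R}}_i(r^*) = \alpha_i \mathcal{H}_i(r^*) - (\alpha_i \mathcal{H}_i(r^*))_+$, and since $\mathcal{H}_i(r^*) \geq 0$ and $\alpha_i > 0$ we have $(\alpha_i \mathcal{H}_i(r^*))_+ = \alpha_i \mathcal{H}_i(r^*)$, again giving $\bar{\mathcal{R}}_i(r^*) = 0$. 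Hence $\bar{\mathcal{R}}(r^*) = 0$.

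I do not expect any serious obstacle here: the argument is structurally identical to Theorem~\ref{thm1}, and the only new ingredient is the remark that a positive diagonal scaling preserves signs, so the thresholds and the $(\cdot)_+$ operation behave exactly as before. The one point requiring a little care is ensuring the positivity of $\alpha_i$ and $\beta_i$ is invoked at each step where a sign or a cancellation is used—specifically, in dividing out $\alpha_i$ to recover $\mathcal{H}_i(r^*) \geq 0$ and in concluding $\mathcal{F}_i(r^*) = 0$ from $\beta_i \mathcal{F}_i(r^*) = 0$—since without strict positivity the equivalence could fail.
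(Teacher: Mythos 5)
Your proof is correct and follows essentially the same route as the paper: the identical component-wise case split on whether $(\Omega_{1}\mathcal{H}(r^*))_{i}$ is at least $(\Omega_{2}(\mathcal{A}r^*+b))_{i}$ in the forward direction, and the identical two complementarity cases in the converse. If anything, your version is slightly more careful than the paper's, since you explicitly record that $\mathcal{H}_{i}(r^*)\geq 0$ and $\mathcal{F}_{i}(r^*)\geq 0$ follow from the $(\cdot)_{+}$ structure and the strict positivity of the diagonal entries.
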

\begin{proof} Suppose $\bar{\mathcal{R}}(r^*)=0$, it follows that $$(\Omega_{1}\mathcal{H}(r^*)-(\Omega_{1}\mathcal{H}(r^*)-\Omega_{2}(\mathcal{A}r^*+b))_{+})=0.$$ Then  we write  
	\begin{equation}\label{eq2}
		\Omega_{1}\mathcal{H}(r^*)=(\Omega_{1}\mathcal{H}(r^*)-\Omega_{2}(\mathcal{A}r^*+b))_{+}.
	\end{equation}
	Component-wise, we  consider two cases:\\
	\textbf{Case 1.} when $(\Omega_{1}\mathcal{H}(r^*))_{i} \geq (\Omega_{2}(\mathcal{A}r^*+b))_{i}$, then   Eq.  (\ref{eq1}) becomes 
	\begin{equation*}
		(\Omega_{1}\mathcal{H}(r^*))_{i}=(\Omega_{1}\mathcal{H}(r^*))_{i}-(\Omega_{2}(\mathcal{A}r^*+b))_{i}.
	\end{equation*}
	Then,  
	$(\Omega_{2}(\mathcal{A}r^*+b))_{i}=0$ $\implies$
	$\mathcal{F}_{i}(r^*)=(\mathcal{A}r^*+b)_{i}=0$.\\
	\textbf{Case 2.}  when $(\Omega_{1}\mathcal{H}(r^*))_{i} ~\textless ~ (\Omega_{2}(\mathcal{A}r^*+b))_{i}$, this implies that $((\Omega_{2}(\mathcal{A}r^*+b))_{i}- (\Omega_{1}\mathcal{H}(r^*))_{i})\ \textless 0$. Then, we get  
	\begin{equation*}
		(\Omega_{1}\mathcal{H}(r^*))_{i}=0 \implies \mathcal{H}_{i}(r^*)=0,
	\end{equation*}
	from case (1) and case (2), we obtain  $\mathcal{H}_{i}(r^*)\mathcal{F}_{i}(r^*)=0 ~\forall ~i.  $ Hence,  $\mathcal{H}(r^*)^{T}\mathcal{F}(r^*)=0.  $\\
	Conversely, let $r^*$ be the solution of  Eq.  (\ref{eq0}). Then \\
	component-wise, we  consider two cases:\\
	\textbf{Case 1.} when $\mathcal{H}_{i}(r^*)=0$  and $\mathcal{F}_{i}(r^*) ~\textgreater~ 0$, then
	$\bar{\mathcal{R}}_{i}(r^*)=-(-(\Omega_{2}(\mathcal{A}r^*+b))_{i})_{+}$
	$\implies$ $\bar{\mathcal{R}}_{i}(r^*)=0.$\\
	\textbf{Case 2.} when $\mathcal{H}_{i}(r^*) ~\textgreater~ 0$  and $\mathcal{F}_{i}(r^*) = 0$, then 
	$\bar{\mathcal{R}}_{i}(r^*)=((\Omega_{1}\mathcal{H}(r^*))_{i}-((\Omega_{1}\mathcal{H}(r^*))_{i})_{+}).$\\
	Thus, $\bar{\mathcal{R}}_{i}(r^*)=0 ~\forall ~i.$
	From case (1) and case (2), we obtain  $\bar{\mathcal{R}}(r^*)=0.$
\end{proof}
\begin{prop}
	Suppose  $\mathcal{A} \in \mathbb{R}^{n \times n}$  and $b \in \mathbb{R}^{n}$. Let  $\delta : \mathbb{R} \rightarrow \mathbb{R}$ be any strictly increasing function  such that $\delta(0)=0$. Then   $r^* $ is the solution of  ICP$( \mathcal{A}, b, f )$ $\iff$ $\mathcal{G}(r^*)=0$,  $\mathcal{G}$ is the function from  $\mathbb{R}$ to  $\mathbb{R}$, given as 
	\begin{equation}\label{eq3}
		\mathcal{G}_{i}(r)=\delta(\lvert (\mathcal{A}r+ b )_{i}-\mathcal{H}_{i}(x)\rvert)-\delta((\mathcal{A}r+b)_{i})-\delta(\mathcal{H}_{i}(x)), ~~ i=1,2,\ldots,n.		
	\end{equation}
\end{prop}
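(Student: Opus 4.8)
The plan is to reduce the statement to a scalar, componentwise characterization and then exploit the sign behaviour forced by $\delta$ being strictly increasing with $\delta(0)=0$. For a fixed index $i$, abbreviate $a = (\mathcal{A}r^*+b)_i = \mathcal{F}_i(r^*)$ and $h = \mathcal{H}_i(r^*)$, so that $\mathcal{G}_i(r^*) = \delta(|a-h|) - \delta(a) - \delta(h)$. The whole proposition then follows once I establish the scalar equivalence
\[
\delta(|a-h|) = \delta(a)+\delta(h) \iff a \geq 0,\ h \geq 0,\ ah = 0,
\]
since the right-hand side, quantified over all $i$, is exactly the ICP system (\ref{eq0}). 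Before anything else I would record the three elementary consequences of the hypotheses on $\delta$: injectivity, together with the sign rules $\delta(x)>0$ for $x>0$ and $\delta(x)<0$ for $x<0$, both immediate from strict monotonicity and $\delta(0)=0$.

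For the forward direction, assuming $r^*$ solves ICP$(\mathcal{A},b,f)$, the complementarity condition gives, at each $i$, either $h=0$ or $a=0$, with both quantities nonnegative. If $h=0$ then $\mathcal{G}_i(r^*) = \delta(|a|)-\delta(a)-\delta(0) = \delta(a)-\delta(a) = 0$, because $a\geq 0$ forces $|a|=a$; the case $a=0$ is symmetric. Hence $\mathcal{G}(r^*)=0$.

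The converse is the substantive part and where I expect the only real work. Assuming $\mathcal{G}_i(r^*)=0$, i.e. $\delta(|a-h|)=\delta(a)+\delta(h)$, I would rule out every sign pattern in which $a$ and $h$ are both nonzero by a short monotonicity estimate. If both are positive, then $|a-h|<\max\{a,h\}$ while $\delta(a)+\delta(h)>\delta(\max\{a,h\})$, forcing the left side strictly below the right. If both are negative, the right side is negative whereas $\delta(|a-h|)\geq 0$. If the signs are opposite, say $a>0>h$, then $|a-h|=a-h>a$ gives $\delta(|a-h|)>\delta(a)$, while $-\delta(h)>0$, so the difference $\delta(|a-h|)-\delta(a)-\delta(h)$ is strictly positive. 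Each pattern contradicts the assumed equality, so at least one of $a,h$ must vanish. Finally, if $a=0$ the equation reduces to $\delta(|h|)=\delta(h)$, and injectivity gives $|h|=h$, that is $h\geq 0$; symmetrically $h=0$ forces $a\geq 0$. In every surviving case $a\geq 0$, $h\geq 0$ and $ah=0$, which is precisely the ICP condition at index $i$. Collecting over all $i$ yields that $r^*$ solves ICP$(\mathcal{A},b,f)$.

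The main obstacle is purely in the converse sign analysis: the monotonicity bounds must be applied with the correct strict inequalities in each of the mixed- and same-sign cases, and a little care is needed at the boundary $a=h$ inside the positive case, where $|a-h|=0$ but the right-hand side is still strictly positive. No continuity or differentiability of $\delta$ is needed, only strict monotonicity and $\delta(0)=0$, which is why the argument stays entirely algebraic and proceeds case by case.
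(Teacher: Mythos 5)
Your proof is correct and follows essentially the same route as the paper's: reduce to a single component and use strict monotonicity of $\delta$ together with $\delta(0)=0$ to force the sign pattern of $(\mathcal{F}_i(r^*),\mathcal{H}_i(r^*))$ into the complementarity configuration. If anything, your exhaustive case split (both positive, both negative, opposite signs, one zero) is more complete than the paper's, which explicitly excludes only $\mathcal{H}_i(r^*)<0$ and the both-positive case, leaving the symmetric case $\mathcal{F}_i(r^*)<0$ and the nonnegativity of the nonvanishing coordinate implicit.
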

\begin{proof} For some $i$, let $\mathcal{H}_{i}(r^*)~\textless~ 0$.  Then it follows that
	\begin{equation}\label{eq4}
		0 ~\textgreater ~\delta(\mathcal{H}_{i}(r^*))= \delta (\mathcal{F}_{i}(r^*)-\mathcal{H}_{i}(r^*))-\delta(\mathcal{F}_{i}(r^*)) \geq -\delta(\mathcal{F}_{i}(r^*)).
	\end{equation} 	
	Thus, $\mathcal{F}_{i}(r^*) \textgreater 0 $ and  $\mathcal{F}_{i}(r^*)-\mathcal{H}_{i}(r^*) ~\textgreater~ \mathcal{F}_{i}(r^*) ~ \textgreater~ 0$. This implies that  
	\begin{equation}\label{eq5}
		\delta (|\mathcal{F}_{i}(r^*)-\mathcal{H}_{i}(r)|)=  \delta (\mathcal{F}_{i}(r^*)-\mathcal{H}_{i}(r^*)) \textgreater \delta (\mathcal{F}_{i}(r)). \end {equation}
		From inequalities  (\ref{eq4}) and   (\ref{eq5}), we get
		$\mathcal{G}(r^*)~\textgreater~ 0$, this is the contradiction.\\
		If $\mathcal{H}_{i}(r^*)~\textgreater ~ 0$  and $\mathcal{F}_{i}(r^*) ~ \textgreater ~ 0$ for some $i$.
		Then, we consider two possibilities:\\
		1. when $\mathcal{H}_{i}(r^*) \textgreater \mathcal{F}_{i}(r^*)$, then 
		\begin{equation*}
			\delta (|\mathcal{F}_{i}(r^*)-\mathcal{H}_{i}(r^*)|)=  \delta (\mathcal{H}_{i}(r^*)-\mathcal{F}_{i}(r^*)) \textless \delta (\mathcal{H}_{i}(r^*)). \end {equation*}
			Then 
			$	\delta (|\mathcal{F}_{i}(r^*)-\mathcal{H}_{i}(r^*)|) - \delta (\mathcal{H}_{i}(r^*)) \textless  0$.
			This implies that 
			\begin{equation}\label{eq6}
				\begin{split}
					\mathcal{G}_{i}(r^*) \textless 0,		
				\end{split}
			\end{equation}
			2. when $\mathcal{F}_{i}(r^*) \textgreater \mathcal{H}_{i}(r^*)$, then 
			\begin{equation*}
				\delta (|\mathcal{F}_{i}(r^*)-\mathcal{H}_{i}(r^*)|)=  \delta (\mathcal{F}_{i}(r^*)-\mathcal{H}_{i}(r^*)) \textless \delta (\mathcal{F}_{i}(r^*)). \end {equation*}
				Then 
				$\delta (|\mathcal{F}_{i}(r^*)-\mathcal{H}_{i}(r^*)|) - \delta (\mathcal{F}_{i}(r^*)) \textless  0$.
				This implies that 
				\begin{equation}\label{eq7}
					\begin{split}
						\mathcal{G}_{i}(r^*) \textless 0.	
					\end{split}
				\end{equation}
				From inequalities (\ref{eq6}) and   (\ref{eq7}), we must have $\mathcal{G}_{i}(r^*) \textless 0$, again a contradiction.  Therefore, $r^*$ solve the   ICP$( \mathcal{A}, b, f)$.\\
				Conversely, let $r^* $ be the solution of   ICP$( \mathcal{A}, b, f)$, then  either $\mathcal{H}_{i}(r^*)=0$ or $\mathcal{F}_{i}(r^*)=0 ~\forall ~i$.\\
				Suppose $\mathcal{H}_{i}(r^*)=0$  and  $\mathcal{F}_{i}(r^*)\textgreater 0$, Then 
				\begin{equation*}
					\mathcal{G}_{i}(r^*)=\delta(\lvert \mathcal{F}_{i}(r^*) |)-\delta(\mathcal{F}_{i}(r^*)).
				\end{equation*} 
				This implies that $	\mathcal{G}_{i}(r^*)=0$.\\
				\noindent 	When $\mathcal{F}_{i}(r^*)=0$  and  $\mathcal{H}_{i}(r^*)\textgreater 0$, Then 
				\begin{equation*}
					\mathcal{G}_{i}(r^*)=\delta(\lvert  -\mathcal{H}_{i}(r^*)|)-\delta(\mathcal{H}_{i}(r^*)).
				\end{equation*} 
				This implies that $	\mathcal{G}_{i}(r^*)=0 ~ \forall~ i$. Therefore, $	\mathcal{G}(r^*)=0$
			\end{proof}	
\section{Conclusion} 
This article presents an equivalent formulation of the implicit complementarity problem. We introduce a equivalent  formulation of the implicit complementarity problem and demonstrate that solution of the equivalent  formulation is equal to the solution of the implicit complementarity problem. Moreover, By using strictly increasing function $\delta $, an  equivalent form of the implicit complementarity problem is provided. 
	
	\subsection*{Acknowledgments}
	The author, Bharat Kumar is thankful to the University Grants Commission (UGC), Government of India, under the SRF fellowship, Ref. No.: 1068/(CSIR-UGC NET DEC. 2017).
	\bibliographystyle{plain}
	\bibliography{p11}

\end{document}